
\documentclass[12pt]{article} 
\usepackage{amsfonts}

\usepackage{amsmath}


\setcounter{MaxMatrixCols}{10}
\newtheorem{theorem}{Theorem}

\newtheorem{lemma}[theorem]{Lemma}

\newenvironment{proof}[1][Proof]{\textbf{#1.} }{\ \rule{0.5em}{0.5em}}

\begin{document}

\title{Regularity of the density for the stochastic heat equation }
\author{Carl Mueller$^1$  \\
Department of Mathematics\\
University of Rochester\\
Rochester, NY 15627  USA\\
email: cmlr@math.rochester.edu 
\and 
David Nualart$^2$ \\
Department of Mathematics\\
University of Kansas\\
Lawrence, Kansas, 66045 USA\\
email: nualart@math.ku.edu}
\maketitle

\footnotetext[1]{Supported by NSA and NSF grants.}
\footnotetext[2]{Supported by NSF grant DMS-0604207.

{\em Key words and phrases.}  heat equation, white noise, Malliavin calculus, stochastic partial differential equations. 

2000 {\em Mathematics Subject Classification.}
Primary, 60H15; Secondary, 60H07.}

\begin{abstract}
We study the smoothness of the density of a semilinear heat equation with 
multiplicative spacetime white noise.  Using Malliavin calculus, we reduce the 
problem to a question of negative moments of solutions of a linear heat 
equation with multiplicative white noise.  Then we settle this question by 
proving that solutions to the linear equation have negative moments of all 
orders. 
\end{abstract}

\section{Introduction}

Consider the one-dimensional stochastic heat equation on $[0,1]$ with
Dirichlet boundary conditions, driven by a two-parameter
white noise, and with initial condition $u_{0}$:
\begin{equation}
\frac{\partial u}{\partial t}=\frac{\partial ^{2}u}{\partial x^{2}}%
+b(t,x,u(t,x))+\sigma (t,x,u(t,x))\frac{\partial ^{2}W}{\partial t\partial x}%
.  \label{e1}
\end{equation}%
Assume that the coefficients $b(t,x,u),\sigma(t,x,u)$ have linear growth in 
$t,x$ and are Lipschitz functions of $u$, uniformly in $(t,x)$. 

In \cite{PZ} Pardoux and Zhang proved that $u(t,x)$ has an absolutely
continuous distribution for all $(t,x)$ $\ $such that $t>0$ and $x\in (0,1)$%
, if $\sigma (0,y_0,u_{0}(y_0))\not=0$ for some $y_0\in (0,1)$. Bally and Pardoux have
studied the regularity of the law of the solution of Equation (\ref{e1})
with Neumann boundary conditions on $[0,1]$, assuming that the coefficients $%
b(u)$ and $\sigma(u) $ are infinitely differentiable functions, which are bounded
together with their derivatives. 

Let $u(t,x)$ be the solution of Equation (\ref{e1}) with Dirichlet boundary
conditions on $[0,1]$ and assume that the coefficients $b$ and $\sigma $ are
infinitely differentiable functions of the variable $u$ with bounded derivatives. The aim of
this paper is to show that if $\sigma (0,y_0,u_{0}(y_0))\not=0$ for some $y_0\in
(0,1)$, then $u(t,x)$ has a smooth density \ for all $(t,x)$ $\ $such that $%
t>0$ and $x\in (0,1)$. Notice that this is exactly the same nondegeneracy
condition \ imposed in \cite{PZ} to establish the absolute continuity.  In
order to show this result we make use of a general theorem on the existence
of negative moments for the solution of Equation (\ref{e1}) in the case $%
b(t,x,u)=B(t,x)u$ and $\sigma (t,x,u)=H(t,x)u$, where $B$ and $H$ are some
bounded and adapted random fields.

\section{Preliminaries}

First we define white noise $W$.  
Let  
\[
W=\{W(A),A \mbox{ a Borel subset of } \mathbb{R}^{2},|A|<\infty \}
\]
be a Gaussian family of random variables with zero mean and covariance%
\begin{equation*}
E\big[W(A)W(B)\big]=|A\cap B|,
\end{equation*}%
where $|A|$ denotes the Lebesgue measure of a Borel subset of $\mathbb{R}%
^{2} $, defined on a complete probability space $(\Omega ,\mathcal{F},P)$.
Then $W(t,x)=W([0,t]\times \lbrack 0,x])$ defines a two-parameter Wiener
process on $[0,\infty )^{2}$.

We are interested in the following one-dimensional heat equation on $%
[0,\infty )\times \lbrack 0,1]$%
\begin{equation}
\frac{\partial u}{\partial t}=\frac{\partial ^{2}u}{\partial x^{2}}%
+b(t,x,u(t,x))+\sigma (t,x,u(t,x))\frac{\partial ^{2}W}{\partial t\partial x}%
,  \label{a1}
\end{equation}%
with initial condition $u(0,x)=u_{0}(x)$, and Dirichlet boundary conditions $%
u(t,0)=u(t,1)=0$. We will assume that $u_{0}$ is a continuous function which
satisfies the boundary conditions $u_{0}(0)=u_{0}(1)=0$. This equation is
formal because the partial derivative 
$\frac{\partial ^{2}W}{\partial t\partial x}\,\ $\ 
does not exist, and (\ref{a1}) is usually replaced by the
evolution equation 
\begin{eqnarray}
u(t,x)
&=&\int_{0}^{1}G_{t}(x,y)u_{0}(y)dy+\int_{0}^{t}%
\int_{0}^{1}G_{t-s}(x,y)b(s,y,u(s,y))u(s,y)dyds  \notag \\
&&+\int_{0}^{t}\int_{0}^{1}G_{t-s}(x,y)\sigma (s,t,u(s,y))u(s,y)W(dy,ds),
\label{a2}
\end{eqnarray}%
where $G_{t}(x,y)$ is the fundamental solution of the heat equation 
on $[0,1]$ with Dirichlet boundary conditions.  Equation (\ref{a2}) is called 
the mild form of the equation.  

If the coefficients $b$ and $\sigma \,\ $are   have linear growth and are
Lipschitz functions of $u$, uniformly in $(t,x)$, there exists a unique
solution of Equation (\ref{a2}) (see Walsh \cite{Wa}).

The Malliavin calculus is an infinite dimensional calculus on a Gaussian
space, which is mainly applied to establish the regularity of the law of
nonlinear functionals of the underlying Gaussian process. We will briefly
describe the basic criteria for existence and smoothness of densities, and we
refer to Nualart \cite{Nu} for a more complete presentation of this subject.

Let $\mathcal{S}$ denote the class of smooth random variables of the the
form 
\begin{equation}
F=f(W(A_{1}),\dots ,W(A_{n})),  \label{a5}
\end{equation}%
where $f$ belongs to $C_{p}^{\infty }(\mathbb{R}^{n})$ ($f$ and all its
partial derivatives have polynomial growth order), and $A_{1},\dots ,A_{n}$
are Borel subsets of $\mathbb{R}_{+}^{2}$ with finite Lebesgue measure. The
derivative of $F$ is the two-parameter stochastic process defined by%
\begin{equation*}
D_{t,x}F=\sum_{i=1}^{n}\frac{\partial f}{\partial x_{i}}(W(A_{1}),\dots
,W(A_{n}))\mathbf{1}_{A_{i}}(t,x).
\end{equation*}%
In a similar way we define the iterated derivative $D^{(k)}F$. The derivative
operator $D$ (resp. its iteration $D^{(k)}$)$\ $ is a closed operator from $%
L^{p}(\Omega )$ into $L^{p}(\Omega ;L^{2}(\mathbb{R}^{2}))$ (resp. $%
L^{p}(\Omega ;L^{2}(\mathbb{R}^{2k}))$) for any $p>1$. For any $p>1$ and for
any positive integer $k$ we denote by $\mathbb{D}^{p,k}$ the completion of $%
\mathcal{S}$ with respect to the norm%
\begin{equation*}
\left\| F\right\| _{k,p}=\left\{ E(|F|^{p})+\sum_{j=1}^{k}E\left[ \left(
\int_{\mathbb{R}^{2j}}\left( D_{z_{1}}\cdots D_{z_{j}}F\right)
^{2}dz_{1}\cdots dz_{j}\right) ^{\frac{p}{2}}\right] \right\} ^{\frac{1}{p}}.
\end{equation*}%
Set $\mathbb{D}^{\infty }=\cap _{k,p}\mathbb{D}^{k,p}$.

Suppose that $F=(F^{1},\ldots ,F^{d})$ is a $d$-dimensional random vector
whose components are in $\mathbb{D}^{1,2}$. Then, we define the Malliavin
matrix of $F$ as the random symmetric nonnegative definite matrix%
\begin{equation*}
\sigma _{F}=\left( \left\langle DF^{i},DF^{j}\right\rangle _{L^{2}(\mathbb{R}%
^{2})}\right) _{1\leq i,j\leq d}.
\end{equation*}%
The basic criteria for the existence and regularity of the density are the
following:

\begin{theorem}
\label{t1} Suppose that $F=(F^{1},\ldots ,F^{d})$ is a $d$-dimensional
random vector whose components are in $\mathbb{D}^{1,2}$. Then,

\begin{enumerate}
\item If $\det \sigma _{F}>0$ almost surely, the law of $F$ is absolutely
continuous.

\item If $F^{i}\in \mathbb{D}^{\infty }$ for each $i=1,\ldots ,d$ and $E%
\left[ (\det \sigma _{F})^{-p}\right] <\infty $ for all $p\geq 1$, then the $%
F$ has an infinitely differentiable density.
\end{enumerate}
\end{theorem}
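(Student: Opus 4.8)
The unifying tool for both parts is the integration-by-parts formula of the Malliavin calculus, built on the duality between the derivative operator $D$ and its adjoint $\delta$ (the divergence, or Skorohod integral), characterized by $E[\langle DF, u\rangle_{L^2(\mathbb{R}^2)}] = E[F\,\delta(u)]$ whenever $F \in \mathbb{D}^{1,2}$ and $u$ lies in the domain of $\delta$. The plan is to first derive, under the hypotheses of part (2), a basic integration-by-parts identity, then iterate it, and finally read off the existence and smoothness of the density from the resulting formulas. Part (1) requires a separate and more delicate argument because its hypotheses are strictly weaker.

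For part (2) I would start from the chain rule $D(\varphi(F)) = \sum_{i=1}^d (\partial_i\varphi)(F)\,DF^i$, valid for $\varphi \in C_p^\infty(\mathbb{R}^d)$ and $F$ with components in $\mathbb{D}^\infty$. Taking the $L^2(\mathbb{R}^2)$-inner product with $DF^j$ produces $\langle D(\varphi(F)), DF^j\rangle = \sum_i (\partial_i\varphi)(F)\,(\sigma_F)_{ij}$; multiplying by $(\sigma_F^{-1})_{jk}$ and summing over $j$ inverts this linear system to express $(\partial_k\varphi)(F)$ as $\sum_j \langle D(\varphi(F)), (\sigma_F^{-1})_{jk}\,DF^j\rangle$. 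Multiplying by a smooth random variable $G$, taking expectations, and applying the duality between $D$ and $\delta$ then yields the key formula $E[(\partial_k\varphi)(F)\,G] = E[\varphi(F)\,H_k(F,G)]$, where $H_k(F,G) = \sum_j \delta(G\,(\sigma_F^{-1})_{jk}\,DF^j)$. The assumptions $F^i \in \mathbb{D}^\infty$ and $E[(\det\sigma_F)^{-p}] < \infty$ for all $p$ guarantee that $(\sigma_F^{-1})_{jk} \in \cap_p L^p$ and hence, via Meyer's inequalities bounding $\delta$ in $L^p$, that $H_k(F,G) \in \cap_p L^p$.

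Iterating this identity over a multi-index $\alpha$ gives $E[(\partial_\alpha\varphi)(F)] = E[\varphi(F)\,H_\alpha]$ with $H_\alpha \in \cap_p L^p$. To conclude, I would apply this with the full multi-index $\alpha = (1,2,\dots,d)$ and $\varphi$ a smoothed indicator, obtaining the representation $p(x) = E[\mathbf{1}_{\{F^1 > x_1, \dots, F^d > x_d\}}\,H_{(1,\dots,d)}]$ for the density, and then differentiate this representation using the higher-order formulas to see that every partial derivative of $p$ is bounded; equivalently, one shows $|E[(\partial_\alpha\varphi)(F)]| \le C_\alpha\|\varphi\|_\infty$ for all $\alpha$, which by a standard Fourier argument forces the law of $F$ to have a density in $C^\infty$. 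The main obstacle is the $L^p$ control of $H_\alpha$: each iteration applies $\delta$ to products involving $\sigma_F^{-1}$ and its derivatives, and since $D(\sigma_F^{-1}) = -\sigma_F^{-1}(D\sigma_F)\sigma_F^{-1}$ every differentiation reintroduces extra factors of $\sigma_F^{-1}$, hence larger negative powers of $\det\sigma_F$; carefully combining Meyer's inequalities with H\"older's inequality is exactly where the negative moments of all orders are consumed.

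For part (1) the hypotheses provide only $F \in (\mathbb{D}^{1,2})^d$ with $\det\sigma_F > 0$ almost surely and no integrability of $(\det\sigma_F)^{-1}$, so the machinery above is unavailable. Here I would invoke the Bouleau--Hirsch criterion: using that $D$ is a local operator and that the chain rule extends to Lipschitz functions, one shows that the image under $F$ of the finite measure $(\det\sigma_F)\cdot P$ is absolutely continuous (the energy-image-density property), and then $\{\det\sigma_F > 0\}$ having full probability upgrades this to absolute continuity of the law of $F$ itself. The delicate point is establishing the energy-image-density property through a covering and approximation argument rather than a single integration by parts, precisely because no negative moments are assumed.
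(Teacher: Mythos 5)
The paper offers no proof of this theorem: it is stated as the standard existence-and-smoothness criterion of the Malliavin calculus, with the reader referred to Nualart \cite{Nu} for details. Your outline --- the integration-by-parts formula obtained from the chain rule and the duality of $D$ and $\delta$, iterated over multi-indices with Meyer's inequalities and the negative moments of $\det \sigma_F$ controlling $H_\alpha$ in every $L^p$ for part (2), and the Bouleau--Hirsch energy-image-density argument exploiting the locality of $D$ for part (1) --- is precisely the canonical proof given in that reference, so your proposal is correct and takes essentially the same route the paper relies on by citation.
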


\section{Negative moments}

\begin{theorem}  
\label{t2}
Let $u(t,x)$ be the solution to the stochastic heat equation%
\begin{eqnarray}
\frac{\partial u}{\partial t}&=&\frac{\partial ^{2}u}{\partial x^{2}}+Bu+Hu%
\frac{\partial ^{2}W}{\partial t\partial x},  \label{e3}  \\
u(0,x) &=& u_0(x)  \notag
\end{eqnarray}%
on $x\in[0,1]$ with Dirichlet boundary conditions.  Assume that 
$B=B(t,x)$ and $H=H(t,x)$ are bounded and adapted processes. Suppose 
that $u_{0}(x)\  $ is a nonnegative continuous function not identically zero. 
Then, 
\begin{equation*}
E[u(t,x)^{-p}]<\infty
\end{equation*}
for all $p\geq 2$, $t>0$ and $0<x<1$.
\end{theorem}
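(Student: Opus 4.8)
The plan is to convert the moment bound into a small-ball estimate with the right (log-normal type) decay, and to obtain that estimate by following a spatial functional of the solution that is a genuine semimartingale in time. By the layer-cake identity,
$$E[u(t,x)^{-p}] = p\int_0^\infty \epsilon^{-p-1}P(u(t,x)\le \epsilon)\,d\epsilon,$$
so it suffices to prove that $P(u(t,x)\le\epsilon)$ decays faster than every power of $\epsilon$; since the problem is multiplicative one should in fact expect a bound of the form $\exp(-c(\log\frac1\epsilon)^2)$, which is more than enough for all $p$. I would first record that $u\ge0$ almost surely: both coefficients $Bu$ and $Hu$ vanish at $u=0$ and $u_0\ge0$, so the comparison theorem for the stochastic heat equation gives nonnegativity, and strict positivity on $(0,1)$ for $t>0$. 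Since $u_0$ is continuous, nonnegative and not identically zero, there is an interval $[c,d]\subset(0,1)$ and $\delta>0$ with $u_0\ge\delta$ on $[c,d]$; by linearity and positivity of the random propagator it is then enough to prove the estimate for initial data bounded below on an interval, which I assume from now on.

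Next I would pass from the point value $u(t,x)$ to the spatial functional
$$V(s)=\int_0^1 u(s,y)\,\phi(y)\,dy,\qquad \phi(y)=\sin(\pi y),$$
the principal Dirichlet eigenfunction. For a fixed small $\tau>0$, the kernel $G_\tau(x,\cdot)$ is bounded below on compact subsets of $(0,1)$, so choosing $\phi$ supported where $G_\tau(x,\cdot)\ge c_\tau>0$ gives, from the mild formula over $[t-\tau,t]$,
$$u(t,x)\ge c_\tau\,V(t-\tau) + \Xi,$$
where $\Xi$ is the drift-plus-stochastic-integral contribution over the last interval. Hence
$$P(u(t,x)\le\epsilon)\le P\big(c_\tau V(t-\tau)\le 2\epsilon\big) + P(\Xi\le-\epsilon),$$
and the second term is a single stochastic integral over a fixed-length interval whose lower tail I can control by Burkholder--Davis--Gundy together with the a priori moment bounds for $u$. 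Everything therefore reduces to a small-ball estimate for $V(t-\tau)$.

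For $V$ I would use that it is a continuous semimartingale: testing (\ref{e3}) against $\phi$ and integrating by parts twice (the boundary terms vanish by the Dirichlet conditions),
$$dV(s)=\Big(-\pi^2 V(s)+\int_0^1 B(s,y)u(s,y)\phi(y)\,dy\Big)ds + dM_s,\qquad d\langle M\rangle_s=\Big(\int_0^1 H(s,y)^2 u(s,y)^2\phi(y)^2\,dy\Big)ds.$$
Applying Itô's formula to $\log V$ (legitimate since $V>0$) yields
$$\log V(t-\tau)=\log V(0)+O(t)+N_{t-\tau}-\tfrac12\langle N\rangle_{t-\tau},$$
where $N_s=\int_0^s V(r)^{-1}\,dM_r$ and the drift is bounded because $B$ is bounded and $|\int_0^1 Bu\phi\,dy|\le K V$. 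Thus $\log V(t-\tau)$ is, up to an $O(t)$ term, a time-changed Brownian motion minus half its clock, and $P(V(t-\tau)\le\eta)$ is governed by the lower tail of $N_{t-\tau}-\tfrac12\langle N\rangle_{t-\tau}$.

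The main obstacle is controlling the quadratic variation (equivalently the clock)
$$\langle N\rangle_{t-\tau}=\int_0^{t-\tau}\frac{\int_0^1 H^2u^2\phi^2\,dy}{\big(\int_0^1 u\phi\,dy\big)^2}\,ds,$$
whose integrand is the ratio $\int u^2\phi^2/(\int u\phi)^2$; this blows up precisely when $u$ develops a tall, thin spike, and a large clock is exactly what could push $\log V$ toward $-\infty$. The heart of the argument is therefore a regularity/anti-concentration estimate showing that this ratio stays bounded, or at least has thin tails, ruling out spikes; this uses the smoothing of the heat semigroup together with the standard $L^p$ and Hölder bounds for $u$, and it is here that the multiplicative structure is essential, since the fluctuations scale with $u$ and the ratio is scale-invariant. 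Once $\langle N\rangle_{t-\tau}$ is controlled, the exponential supermartingale $\exp(\theta N-\tfrac12\theta^2\langle N\rangle)$ and a Bernstein-type inequality give the sub-Gaussian lower tail for $\log V(t-\tau)$, hence the $\exp(-c(\log\frac1\epsilon)^2)$ small-ball bound and, through the first display, finiteness of all negative moments.
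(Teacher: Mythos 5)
There are two genuine gaps, and the first one breaks the architecture of your argument. In the one-step decomposition $P(u(t,x)\le\epsilon)\le P\big(c_\tau V(t-\tau)\le 2\epsilon\big)+P(\Xi\le-\epsilon)$, the second term does \emph{not} decay as $\epsilon\to0$: for fixed $\tau$, $P(\Xi\le-\epsilon)$ increases to $P(\Xi<0)$, a positive constant, because the noise is multiplicative and the fluctuation $\Xi$ over $[t-\tau,t]$ has size proportional to the solution itself (its conditional variance involves $\int\int G_{t-s}^2H^2u^2$, which is of order $V(t-\tau)^2\sqrt{\tau}$, not of order $\epsilon^2$). Burkholder--Davis--Gundy with a priori moment bounds only yields $P(|\Xi|\ge\epsilon)\le C\,\epsilon^{-2q}$, which is useless for small $\epsilon$; and shrinking $\tau=\tau(\epsilon)$ to tame $\Xi$ destroys the kernel lower bound, since $c_\tau\sim e^{-C/\tau}$ for $y$ at fixed distance from $x$, so the first term's threshold $2\epsilon/c_\tau$ explodes. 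No tuning of $\tau$ makes both terms superpolynomially small: your bound saturates at a constant independent of $\epsilon$. This is precisely the difficulty that forces the paper into an \emph{iterated} scheme: it introduces stopping times $\tau_n$ and processes $v_n$ restarted at the renormalized height $(c/2)^n\mathbf{1}_{[a-\gamma\tau_{n-1},\,b+\gamma\tau_{n-1}]}$, so that at every scale the noise term is of order one relative to the current height (and can be truncated so that the large-deviation bound of Lemma \ref{lem1} applies); the event $\{u(T,x)<(c/2)^n\}$ then forces at least $n/2$ of the increments $\tau_i-\tau_{i-1}$ to be shorter than $2T/n$, each such event costing $\exp(-C_1n^{1/2-\epsilon})$, and a combinatorial count over $\binom{n}{n/2}=O(2^n)$ subsets gives $P\big(u(T,x)<(c/2)^n\big)\le C_0\exp(-C_1n^{3/2-\epsilon})$. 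Smallness of $u$ must be paid for at every scale, not once at the end.

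The second gap is the one you yourself call ``the heart of the argument'': control of the clock $\langle N\rangle=\int_0^{t-\tau}\big(\int H^2u^2\phi^2\big)\big/\big(\int u\phi\big)^2\,ds$. This cannot follow from ``standard $L^p$ and H\"older bounds for $u$,'' because with multiplicative noise those bounds scale with the magnitude of the solution, while the ratio is scale-invariant: a solution that is uniformly tiny can still be spiky \emph{relative to its own scale}, making the ratio large. A quantitative tail bound ruling out such spikes is an anti-concentration statement of essentially the same depth as the theorem being proved, so deferring it leaves the proof incomplete at its central point (and circular in spirit). The It\^o/exponential-martingale part of your plan is sound \emph{conditionally} on a bounded clock, and would in fact give the stronger small-ball bound $\exp(-c(\log(1/\epsilon))^2)$, compared with the paper's $\exp(-C_1(\log(1/\epsilon))^{3/2-\epsilon})$; but as written neither of the two essential estimates is established, and the first one fails outright.
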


For the proof of this theorem we will make use of the following large 
deviations lemma, which follows from Proposition A.2, page 530, of Sowers
\cite{So}. 

\begin{lemma} \label{lem1}
Let $w(t,x)$ be an adapted stochastic process, bounded in absolute value by a 
constant $M$.   Let $\epsilon>0$.  Then, there exist constants $C_0$,  $C_1>0$ 
such that for  all $\lambda>0$  and all $T>0$ 
\[
P\left( \sup_{0\le t\le T} \sup _{0\le x\le 1} \left| \int_0^t \int_0^1 G_{t-s} (x,y) w(s,y) W(ds,dy) \right| >\lambda \right) \le C_0 \exp \left( -\frac {C_1 \lambda^2} {T^{\frac 12-\epsilon} }\right).
\]
\end{lemma}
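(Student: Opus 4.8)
The plan is to treat the random field
\[
Z(t,x) = \int_0^t \int_0^1 G_{t-s}(x,y)\, w(s,y)\, W(ds,dy)
\]
by the moment method: I will bound all even moments of $S:=\sup_{t\le T,\,x}|Z(t,x)|$ with tight control of their dependence on the order, and then recover the sub-Gaussian tail by Chebyshev's inequality optimized over the moment order. The first observation is that, because the kernel $G_{t-s}$ depends on the \emph{terminal} time $t$, the process $t\mapsto Z(t,x)$ is \emph{not} a martingale; consequently the clean exponential martingale maximal inequality is unavailable, and this is the central obstacle. I will get around it with the factorization method.

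Fix $\alpha\in(0,\tfrac14)$. Using the semigroup property $\int_0^1 G_{t-r}(x,z)G_{r-s}(z,y)\,dz=G_{t-s}(x,y)$ together with the identity $\int_s^t (t-r)^{\alpha-1}(r-s)^{-\alpha}\,dr=\pi/\sin(\pi\alpha)$, a stochastic Fubini argument gives
\[
Z(t,x)=\frac{\sin(\pi\alpha)}{\pi}\int_0^t (t-r)^{\alpha-1}\!\int_0^1 G_{t-r}(x,z)\,Y_\alpha(r,z)\,dz\,dr,\qquad Y_\alpha(r,z)=\int_0^r\!\int_0^1 (r-s)^{-\alpha}G_{r-s}(z,y)\,w(s,y)\,W(ds,dy).
\]
The operator carrying $Y_\alpha$ to $Z$ is a fixed deterministic convolution, and for $q$ large enough (depending on $\alpha$) the parabolic Sobolev embedding gives
\[
\sup_{t\le T,\;0\le x\le 1}|Z(t,x)|\le C\Big(\int_0^T\!\int_0^1 |Y_\alpha(r,z)|^q\,dz\,dr\Big)^{1/q},
\]
with $C$ independent of the order $n$, of $T$, and of $w$.

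Next I bound the moments of $Y_\alpha$. Since $|w|\le M$, its conditional quadratic variation is at most $M^2\int_0^r (r-s)^{-2\alpha}\big(\int_0^1 G_{r-s}(z,y)^2\,dy\big)ds$, and the heat-kernel bound $\int_0^1 G_u(z,y)^2\,dy\le C u^{-1/2}$ makes this time integral convergent (here $\alpha<\tfrac14$ is used) and of order $r^{1/2-2\alpha}\le T^{1/2-2\alpha}$. The Burkholder--Davis--Gundy inequality, whose $2n$-th order constant is of order $(Cn)^n$, then yields $E\big[|Y_\alpha(r,z)|^{2n}\big]\le (C n M^2)^n\,T^{(1/2-2\alpha)n}$. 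Feeding this into the Sobolev bound via Minkowski's integral inequality (valid once $2n\ge q$; small $n$ are trivial) and integrating the $(r,z)$-uniform bound over $[0,T]\times[0,1]$ gives
\[
E\big[S^{2n}\big]\le (C n M^2)^n\,T^{(1/2-2\alpha+2/q)\,n}.
\]
Given $\epsilon>0$, I choose $\alpha\in(0,\tfrac14)$ and $q$ large so that the Sobolev embedding holds and $2\alpha-2/q=\epsilon$, producing the exponent $a:=1/2-\epsilon$.

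Finally, set $A=C M^2 T^{a}$, so that $E[S^{2n}]\le (An)^n$. Chebyshev's inequality gives $P(S>\lambda)\le (An/\lambda^2)^n$ for every integer $n\ge1$; optimizing by taking $n$ near $\lambda^2/(eA)$ yields $P(S>\lambda)\le C_0\exp(-C_1\lambda^2/A)=C_0\exp(-C_1\lambda^2/T^{1/2-\epsilon})$, where $C_1$ absorbs $M$ and $C_0\ge1$ covers the regime of small $\lambda$ (there the optimal $n$ is below one and the bound is trivial). The delicate point throughout is bookkeeping: one must keep the order dependence of every constant polynomial of the form $(Cn)^n$ — in the BDG step and in the Sobolev/Minkowski step — since faster growth would destroy the quadratic $\lambda^2$ exponent after optimization; the loss of $\epsilon$ in the power of $T$ is exactly the price of the gap $2\alpha-2/q$ forced between the time-regularization exponent $\alpha$ and the Sobolev integrability $1/q$.
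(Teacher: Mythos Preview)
Your argument is correct. Note, however, that the paper does not give its own proof of this lemma: it simply records that the estimate ``follows from Proposition~A.2, page~530, of Sowers~[So]'' and moves on. Sowers' appendix obtains such supremum tail bounds by combining an exponential-martingale (Bernstein-type) inequality for the stochastic convolution at fixed $(t,x)$ with a Kolmogorov/Garsia--Rodemich--Rumsey continuity argument to pass to the supremum; the $\epsilon$-loss there comes from the H\"older-increment step. Your route is a genuine alternative: the Da~Prato--Kwapie\'n--Zabczyk factorization trades the non-martingale $Z$ for the deterministic image of the auxiliary field $Y_\alpha$, whose moments you control by BDG with the sharp $(Cn)^n$ constant; the $L^q\to C$ mapping of the factorization operator transfers these to moments of $\sup|Z|$, and Chebyshev optimized over the moment order converts $(An)^n$ growth into a sub-Gaussian tail. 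The advantage of your approach is that it is self-contained and makes the moment bookkeeping explicit; Sowers' route is shorter once one is willing to quote the continuity lemma. Two minor points worth recording: the claim that the Sobolev constant $C$ is independent of $T$ relies on the exponential decay of the Dirichlet heat kernel for large times (on the whole line the constant would grow like a positive power of $T$); and the simultaneous constraints $\alpha\in(0,\tfrac14)$, $q>3/(2\alpha)$ for the embedding, and $2\alpha-2/q=\epsilon$ force $\epsilon/2<\alpha<3\epsilon/2$, so one should remark that $\epsilon$ may be taken small without loss of generality.
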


We also need a comparison theorem such as Corollary 2.4 of \cite{Sh};  
see also Theorem 3.1 of Mueller \cite{Mu} or Theorem 2.1 of Donati-Martin and 
Pardoux \cite{DP}.  
Shiga's result is for $x\in\mathbb{R}$, but it can easily be extended to the 
following  lemma, which deals with $x\in[0,1]$ and Dirichlet boundary 
conditions. 

\begin{lemma} 
\label{lem-compare}
Let $u_i(t,x):  i=1,2$ be two solutions of 
\begin{eqnarray}
\frac{\partial u_i}{\partial t}&=&\frac{\partial ^{2}u_i}{\partial x^{2}}+B_iu_i+Hu_i%
\frac{\partial ^{2}W}{\partial t\partial x},  \label{e3a}  \\
u_i(0,x) &=& u_0^{(i)}(x)  \notag
\end{eqnarray}%
where $B_i(t,x), H(t,x), u^{(i)}_0(x)$ satisfy the same conditions as in 
Theorem \ref{t2}.  Also assume that with probability one for all 
$t\geq0,x\in[0,1]$
\begin{eqnarray*}
B_1(t,x) &\leq& B_2(t,x)  \\
u_0^{(1)}(x) &\leq& u_0^{(2)}(x).
\end{eqnarray*}
Then with probability 1, for all $t\geq0, x\in[0,1]$.  
\[
u_1(t,x)\leq u_2(t,x).
\]
\end{lemma}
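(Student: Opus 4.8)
The plan is to reduce the comparison to a positivity statement and then to run Shiga's argument with the whole-line Gaussian kernel replaced by the Dirichlet heat kernel $G_t(x,y)$ on $[0,1]$. Set $v=u_2-u_1$. Subtracting the two mild equations (\ref{e3a}) and writing $B_2u_2-B_1u_1=B_2v+(B_2-B_1)u_1$ and $H(u_2-u_1)=Hv$, I find that $v$ solves
\[
v(t,x)=\int_0^1 G_t(x,y)\big(u_0^{(2)}(y)-u_0^{(1)}(y)\big)\,dy+\int_0^t\int_0^1 G_{t-s}(x,y)\big[B_2(s,y)v(s,y)+(B_2-B_1)(s,y)u_1(s,y)\big]\,dy\,ds+\int_0^t\int_0^1 G_{t-s}(x,y)H(s,y)v(s,y)\,W(dy,ds).
\]
This is the mild form of $\partial_t v=\partial_x^2 v+B_2v+g+Hv\,\dot W$ with additive source $g=(B_2-B_1)u_1$ and initial datum $v(0,\cdot)=u_0^{(2)}-u_0^{(1)}\ge 0$. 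The point of this rearrangement is that, although the drift functions $B_1u$ and $B_2u$ of the original pair are not ordered for all $u$ (only for $u\ge 0$), the drifts $B_2u+g$ and $B_2u$ of the reduced problem differ by $g$ for every $u$, so they \emph{are} globally ordered provided $g\ge 0$.

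To see that $g\ge 0$ I first need $u_1\ge 0$. This is itself a comparison of $u_1$ (with initial datum $u_0^{(1)}\ge 0$) against the identically zero solution, which has the \emph{same} drift coefficient $B_1$; since the two drift functions coincide and the initial data are ordered, positivity of $u_1$ follows with no circularity. With $u_1\ge 0$ and $B_2\ge B_1$, and since the Dirichlet kernel satisfies $G_t(x,y)\ge 0$, both the initial term and the source term above are nonnegative. The lemma is therefore reduced to the assertion that a solution of a linear multiplicative-noise heat equation with nonnegative initial and source data is nonnegative, i.e. to comparing $v$ with the zero solution. For this I would invoke the comparison results of \cite{Sh}, \cite{Mu}, \cite{DP}: the only properties of the kernel their proofs use are nonnegativity, the Chapman--Kolmogorov identity, and the bound $\int_0^1 G_{t-s}(x,y)^2\,dy=O((t-s)^{-1/2})$ that makes the stochastic integral well defined, all of which hold for the Dirichlet kernel on $[0,1]$. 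The boundary conditions enter only through this choice of kernel and cause no extra difficulty, since $v$ vanishes at $x=0$ and $x=1$.

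The main obstacle is exactly the positivity of the reduced equation, because the stochastic integral destroys any naive monotonicity: a Picard iteration started from the nonnegative forcing fails to preserve the sign, as the martingale increments and the sign of $B_2$ are uncontrolled. The heart of the argument is thus the behaviour of $v$ near $\{v=0\}$. One clean way to localize this, and the route I would write out, is to mollify the white noise in the space variable so that for each fixed $\omega$ the regularized equation becomes a parabolic PDE with continuous random coefficients, apply the classical maximum principle pathwise to get $u_1^\varepsilon\le u_2^\varepsilon$, and then pass to the limit using $u_i^\varepsilon\to u_i$ in probability, uniformly on compact sets. An alternative is to apply It\^o's formula to $E\int_0^1\psi_n(v(t,x))\,dx$ for smooth convex approximations $\psi_n$ of $(v^-)^2$: the Laplacian contributes a favorable sign, the quadratic-variation term from $Hv$ is supported on $\{v<0\}$ and is controlled by the boundedness of $H$, and Gronwall's inequality forces $E\int_0^1(v^-(t,x))^2\,dx=0$. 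Verifying the convergence of the approximations and the vanishing of the boundary contributions is the only technical point; once it is in place, the cited comparison theorems apply with purely cosmetic changes.
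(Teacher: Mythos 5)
The paper itself gives no self-contained proof of this lemma: it simply cites Corollary 2.4 of Shiga \cite{Sh} (see also \cite{Mu}, \cite{DP}) and asserts that the extension from $\mathbb{R}$ to $[0,1]$ with Dirichlet boundary conditions is routine. Your primary route is ultimately the same one --- every sign question is finally delegated to those cited comparison theorems, together with the observation that their proofs only use nonnegativity of the kernel, the semigroup property, and the bound $\int_0^1 G_{t-s}(x,y)^2\,dy=O((t-s)^{-1/2})$, all of which hold for the Dirichlet kernel. What you add, and what the paper's one-line justification glosses over, is the reduction needed because the drifts $B_1u$ and $B_2u$ are ordered only for $u\geq 0$, so a drift-ordered comparison theorem does not apply verbatim: you first obtain $u_1\geq 0$ by comparison against the zero solution (equal drifts, and here you correctly use that $u_0^{(1)}\geq 0$ is part of the hypotheses inherited from Theorem \ref{t2}), and then compare $v=u_2-u_1$, which solves a linear equation with nonnegative source $g=(B_2-B_1)u_1$ and nonnegative initial datum, against zero. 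This two-step reduction is correct and is a genuine gain in precision over the paper. Two technical points deserve a sentence in a full write-up: the coefficients of the reduced equation are random and adapted (so the cited theorems must be invoked in that generality, as the paper implicitly does), and $g$ is unbounded in $(t,x,\omega)$, which you can handle by localizing with the stopping times $T_N=\inf\{t:\sup_x u_1(t,x)\geq N\}$.

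By contrast, the two ``self-contained'' alternatives you sketch are not sound as stated, and you should not present them as easy substitutes. Mollifying the noise in the space variable alone does not produce, for fixed $\omega$, a classical parabolic PDE: the mollified noise $\dot W^\varepsilon$ is still white in time, so the regularized equation is still a stochastic equation and the pathwise maximum principle does not apply (one would need a Wong--Zakai type approximation, with its correction terms, to argue pathwise). Similarly, It\^o's formula cannot be applied directly to $\int_0^1\psi_n(v(t,x))\,dx$: the solution is only H\"older continuous of order $\frac12-\epsilon$ in space, so $\partial_x v$ does not exist and the integration by parts producing the favorable term $-\int\psi_n''(v)(\partial_x v)^2$ is formal; moreover $t\mapsto v(t,x)$ is not a semimartingale for fixed $x$, so the expansion must be carried out on a smoothed or weak formulation. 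Making either of these rigorous is exactly the nontrivial content of the proofs in \cite{Sh} and \cite{DP}. Since your main argument rests on the citations rather than on these sketches, the proposal stands, but the alternative paragraphs should either be removed or explicitly acknowledged as requiring the full machinery of the cited papers.
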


\begin{proof}[Proof of Theorem \ref{t2}]
We shall repeatedly use the comparison lemma, Lemma \ref{lem-compare}, along 
with the  following argument.    
Observe that if $0<w(t,x)\leq u(t,x)$ with probability one, and $p>0$, then
\[
E\left[u(t,x)^{-p}\right] \leq E\left[w(t,x)^{-p}\right].
\]
Thus, to bound $E[u(t,x)^{-p}]$, it suffices to find a nonnegative function 
$w(t,x)\leq u(t,x)$ and to prove a bound for $E[w(t,x)^{-p}]$.  Such a 
function $w(t,x)$ might be found using the comparison lemma, Lemma 
\ref{lem-compare}.

Suppose that $|B(t,x)|\leq K$ almost surely for some constant $K>0$. By the 
comparison lemma, Lemma \ref{lem-compare}, it suffices to 
consider the solution to the equation%
\begin{eqnarray}
\frac{\partial w}{\partial t}&=&\frac{\partial ^{2}w}{\partial x^{2}}-Kw+Hw%
\frac{\partial ^{2}W}{\partial t\partial x}  \label{e4}  \\
w(0,x) &=& u_0(x)  \notag
\end{eqnarray}%
on $x\in[0,1]$ with Dirichlet boundary conditions.  
Indeed, the comparison lemma implies that a solution $w(t,x)$ of 
(\ref{e4}) will be less than or equal to a solution $u(t,x)$ of (\ref{e3}). 
Then we can use the argument outlined in the previous
paragraph to conclude that the boundedness of 
$E\left[w(t,x)^{-p}\right]$ implies the boundedness of 
$E\left[u(t,x)^{-p}\right]$.  

Set $u(t,x)=e^{-Kt}w(t,x)$, where $u(t,x)$ is not the same as earlier in the 
paper. Simple calculus shows that $u(t,x)$ satisfies%
\begin{eqnarray}
\frac{\partial u}{\partial t}&=&\frac{\partial ^{2}u}{\partial x^{2}}+Hu\frac{%
\partial ^{2}W}{\partial t\partial x}.  \label{e4b}  \\
u(0,x) &=& u_0(x) \notag
\end{eqnarray}%
and we have
\[
E\left[w(t,x)^{-p}\right] = e^{Ktp}E\left[u(t,x)^{-p}\right].
\]
So, we can assume that $K=0$, that is that $u(t,x)$ satisfies (\ref{e4b}).
The mild formulation of Equation (\ref{e4b})  is%
\begin{equation*}
u(t,x)=\int_{0}^{1}G_{t}(x,y)u_{0}(y)dy+\int_{0}^{t}%
\int_{0}^{1}G_{t-s}(x,y)H(s,y)u(s,y)W(ds,dy).
\end{equation*}%
 
Suppose that $u_{0}(x)\geq \delta >0$ for all $x\in \lbrack a,b]\subset (0,1)
$. Since (\ref{e4b}) is linear, we may divide this equation by $\delta $,
and assume $\delta =1$, and also $u_{0}(x)=\mathbf{1}_{[a,b]}(x)$.   Fix $T>0$, and
consider a larger interval $[a,b]\subset [c,d]$ of the form $d=b+\gamma T$ and 
$c=a-\gamma T$, where $\gamma>0$.  We are going to show that $E((u(T,x)^{-p})<\infty$ for 
$x\in [c,d]$ and for any $p\ge 1$.
Define 
\[
c= \inf_{0\le t+s \le T,} \inf_{a-\gamma (t+s)\le x\le  b+\gamma (t+s)} \int_{a-\gamma s}^{b+\gamma s} G_t(x,y)dy
\]
and note that $0<c<1$ for each $\gamma>0$ and $(a,b)\in(0,1)$.  
Next we inductively define a sequence $\left\{ \tau _{n},n\geq 0\right\} $
of stopping times and a sequence of processes $v_{n}(t,x)$ as follows.
 Let $v_{0}(t,x)$ be the solution of (\ref%
{e4b}) with initial condition $u_{0}=\mathbf{1}_{[a,b]}$ and let 
\[
\tau_0= \inf\left\{t>0:  \inf_{a-\gamma t \le x\le b+\gamma t} 
v_0(t,x)=\frac{c}{2}  \, \mbox{ or }\,  \sup_{0\le x\le 1} v_0(t,x)= \frac{2}{c}\right\}.
\]
Next, assume that we have defined $\tau_{n-1}$ and  $v_{n-1}(t,x)$ for $\tau_{n-2}\le t\le\tau_{n-1}$. Then, $\{v_n(t,x), \tau_{n-1}\le t\}$ is defined by (\ref{e4b}) with initial condition $v_n(\tau_{n-1},x)=(\frac c2)^{n} \mathbf{1}_{[a-\gamma \tau_{n-1}, b+\gamma \tau_{n-1}]}(x)$.  Also, let
\begin{eqnarray*}
\lefteqn{ 
\tau_n= \inf\Bigg\{t>\tau_{n-1}:  \inf_{a-\gamma t \le x\le b+\gamma t} 
v_n(t,x)=\left(\frac c2\right)^{n+1}  }\\
&& \hspace{1.5in} \mbox{ or } \sup_{0\le x \le 1} 
v_n(t,x)=\left(\frac 2c\right)^{-n+1}\Bigg\}.
\end{eqnarray*}
It is not hard to see that $\tau_n <\infty$ almost surely.
Notice that
\[
\inf_{a-\gamma  \tau_n \le x \le b+\gamma \tau_n} v_n(\tau_n ,x) \ge \left(\frac c2\right)^{n+1}.
\]
By the comparison lemma, we have that
\begin{equation}
u(t,x)\ge v_n(t,x)  \label{e5}
\end{equation}
for all $(t,x)$ and all $n\ge 0$.  
For all $p\ge 1$ we have
\begin{eqnarray}
E\left[u(T,x)^{-p}\right]
&\leq& P\Big(u(T,x)\geq 1\Big)  \notag  \\
&&      + \sum_{n=0}^{\infty}\left( \frac 2c\right)^{np}P\left(u(T,x)\in\Big[ \left(\frac c2\right)^{n+1},\left(\frac c2\right)^n\Big)\right)  \notag\\
&\leq& 1 +  \sum_{n=0}^{\infty}\left( \frac 2c\right)^{np}  P\left(u(T,x) < \left(\frac c2\right)^n \right).
\label{e44}
\end{eqnarray}
 Taking into account  (\ref{e5}), the event $\{ u(T,x) <(\frac c2)^n\}$ is included in $\mathcal{A}_n=\{\tau_n <T\}$. Set $\sigma_n=\tau_n- \tau_{n-1}$, for all $n\ge 0$, with the convention $\tau_{-1}=0$.

We have
\begin{eqnarray*}
  P\left(\sigma_i <\frac 2n \bigg| \mathcal{F}_{\tau_{i-1}}\right) &\le& 
  P\left( \sup_{\tau_{i-1} <t <\tau_{i-1} +\frac 2n,} \sup_{0\le x\le 1}   v_i(t,x) >\left(\frac 2c\right)^{-i+1}\right) \\
&&  + P\left( \inf_{\tau_{i-1} <t <\tau_{i-1} +\frac 2n,}  \inf_{a-\gamma t \le x\le b+\gamma t}  v_i(t,x) >\left(\frac c2\right)^{i+1}   \right) \\
\end{eqnarray*}
Notice that, for $\tau_{i-1} <t< \tau_i$ we have
\begin{eqnarray*}
\lefteqn{
\left(\frac 2c\right)^iv_i(t,x) =  \int_{a-\gamma \tau_{i-1}}^{b+\gamma \tau_{i-1}} G_{t-\tau_{i-1}}(x,y) dy }\\
&&+\int_{\tau_{i-1}} ^t \int_0^1 G_{t-s}(x,y) H(s,y) 
 \left(\left[\left(\frac 2c\right)^i v_i(s,y)\right]\wedge 
  \frac2c \right)W(ds,dy).
\end{eqnarray*}
As a consequence, by Lemma \ref{lem1}
\begin{eqnarray}
 P\left( \sigma_i <\frac 2n \bigg| \mathcal{F}_{\tau_{i-1}}\right) &  \le & P\left( \sup_{\tau_{i-1} \le t\le \tau_{i-1}+\frac 2n,}   \notag
\sup_{0\le x\le 1} \left| N_i(t,x)\right| > 1 \right)  \\
&\le&  C_0 \exp \left( - C_1  n^{\frac 12 -\epsilon} \right).  \label{e11}
\end{eqnarray}

Next we set up some notation.  Let   $\mathcal{B}_n$ be the event that at least half of the variables 
$\sigma_i: i=0,\ldots,n$ satisfy
\[
\tau_i < \frac{2T}{n}
\]
Note that
\[
 \mathcal{A}_n \subset \mathcal{B}_n
\]
since if more than half of the $\sigma_i: i=1,\ldots,n$ are larger than or equal 
to $2T/n$ then $\tau_n>T$.   

For convenience we assume that $n=2k$ is even, and leave the odd case to the 
reader.  Let $\Xi_n$ be all the subsets of $\{1,\ldots,n\}$ of cardinality 
$k=n/2$.  Using Stirling's formula, the reader can verify that as $n\to\infty$
\begin{equation}
{n \choose n/2} = O(2^n)    \label{e2}
\end{equation}
Then,
\begin{eqnarray*}
P(\mathcal{B}_n)
&\leq& P\left(\bigcup_{\{i_1,\ldots,i_k\}\in\Xi_n}\bigcap_{j=1}^{k}
             \left\{\sigma_{i_j}<\frac{2T}{n}\right\}\right)    \\
&\leq&  \sum_{\{i_1,\ldots,i_k\}\in\Xi_n} P\left(\bigcap_{j=1}^{k} \sigma_{i_j}<\frac{2T}{n}\right) 
\end{eqnarray*}
Using the estimate (\ref{e11})  and (\ref{e2}) yields
 \begin{eqnarray*}
P(\mathcal{B}_n)&\leq&  C_0 2^{n} \exp\left(-C_1n^{1/2-\varepsilon}\right)^n  \\
&\leq&  C_0 \exp\left(-C_1 n^{3/2-\varepsilon}+C_2n\right)   \\
&\leq&  C_0 \exp\left(-C_1 n^{3/2-\varepsilon}\right) 
\end{eqnarray*}
where the constants $C_0,C_1$ may have changed from line to line.  
Hence,
\begin{equation}
P\left(  u(T,x) <\left(\frac c2\right)^n\right) \le C_0 \exp\left(-C_1 n^{3/2-\varepsilon} \right)
\label{e33}
\end{equation}
Finally, substituting  (\ref{e3}) into  (\ref{e4}) yields $E\left[u(T,x)^{-p}\right]<\infty$.
\end{proof}

\section{Smoothness of the density}

Let $u(t,x)$ be the solution to Equation (\ref{a1}). Assume that the
coefficients $b$ and $\sigma $ are continuously differentiable with bounded
derivatives. Then $u(t,x)$ belongs to the Soboev space $\mathbb{D}^{1,p}$
for all $p>1$, and the derivative $D_{\theta ,\xi }u(t,x)$ satisfies the
following evolution equation%
\begin{eqnarray}
D_{\theta ,\xi }u(t,x) &=&\int_{\theta
}^{t}\int_{0}^{1}G_{t-s}(x,y)\frac{\partial b}{\partial u}(s,y,u(s,y))D_{\theta ,\xi
}u(s,y)dyds  \notag \\
&&+\int_{\theta }^{t}\int_{0}^{1}G_{t-s}(x,y)\frac{\partial\sigma}{\partial u}
(s,y,u(s,y))D_{\theta ,\xi }u(s,y)W(dy,ds)  \notag \\
&&+\sigma (u(\theta ,\xi ))G_{t-\theta }(x,\xi ),  \label{e22}
\end{eqnarray}%
if $\theta <t$ and $D_{\theta ,\xi }u(t,x)=0$ if $\theta >t$. That is, $%
D_{\theta ,\xi }u(t,x)$ is the solution of the stochastic partial
differential equation%
\begin{equation*}
\frac{\partial D_{\theta ,\xi }u}{\partial t}=\ \frac{\partial ^{2}D_{\theta
,\xi }u}{\partial x^{2}}+ \frac{\partial b}{\partial u}(t,x,u(t,x))  D_{\theta ,\xi }u
+ \frac{\partial\sigma}{\partial u}
(t,x,u(t,x))D_{\theta ,\xi }u\frac{\partial ^{2}W}{\partial t\partial x}
\end{equation*}%
on $[\theta ,\infty )\times \lbrack 0,1]$, with Dirichlet boundary
conditions and initial condition $\sigma (u(\theta ,\xi ))\delta _{0}(x-\xi )
$.

\begin{theorem}
Let $u(t,x)$ be the solution of Equation (\ref{a1}) with initial condition $%
u(0,x)=u_{0}(x)$, and Dirichlet boundary conditions $u(t,0)=u(t,1)=0$. We
will assume that $u_{0}$ is an  $\alpha$-H\"older  continuous function for some $\alpha>0$,   which satisfies the boundary conditions $%
u_{0}(0)=u_{0}(1)=0$. Assume that the coefficients $b$ and $\sigma $ are
infinitely differentiable functions with bounded derivatives. Then, if $%
\sigma (0,y_{0},u_{0}(y_{0}))\not=0$ for some $y_{0}\in (0,1)$, $u(t,x)$ has
a smooth density \ for all $(t,x)$ $\ $such that $t>0$ and $x\in (0,1)$.
\end{theorem}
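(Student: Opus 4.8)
The plan is to verify the hypotheses of the second part of the Malliavin criterion, Theorem \ref{t1}, for the scalar random variable $F=u(t,x)$. Being one-dimensional, its Malliavin matrix is the scalar $\sigma_F=\int_0^t\int_0^1(D_{\theta,\xi}u(t,x))^2\,d\xi\,d\theta$, so it suffices to show that $u(t,x)\in\mathbb D^\infty$ and that $E[\sigma_F^{-p}]<\infty$ for every $p\ge 1$. The first requirement is the routine part: differentiating the mild equation (\ref{a2}) repeatedly, each iterated Malliavin derivative solves a linear equation of the type (\ref{e22}) whose coefficients are bounded (because $b,\sigma\in C^\infty$ with bounded derivatives) and whose inhomogeneous terms are built from lower-order derivatives and heat kernels; a Gronwall argument together with Burkholder's inequality and the estimate $\int_0^t\int_0^1 G_{t-s}(x,y)^2\,dy\,ds<\infty$ gives uniform $L^p(\Omega)$ bounds on all orders, so $u(t,x)\in\mathbb D^{k,p}$ for all $k,p$. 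I would only sketch this, as it follows the classical scheme for parabolic SPDEs.

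The substance lies in the negative moments of $\sigma_F$, and this is where Theorem \ref{t2} enters. For fixed $(\theta,\xi)$ the process $(t,x)\mapsto D_{\theta,\xi}u(t,x)$ solves the linearized equation with bounded adapted coefficients $B=\frac{\partial b}{\partial u}$ and $H=\frac{\partial\sigma}{\partial u}$ evaluated along the solution, started from the impulse $\sigma(\theta,\xi,u(\theta,\xi))\,\delta(\cdot-\xi)$ at time $\theta$. Since this equation is linear and $\sigma(\theta,\xi,u(\theta,\xi))$ is $\mathcal F_\theta$-measurable, I would factor $D_{\theta,\xi}u(t,x)=\sigma(\theta,\xi,u(\theta,\xi))\,\Phi_{\theta,\xi}(t,x)$, where $\Phi_{\theta,\xi}$ is the fundamental solution of the linearized equation started from the unit mass $\delta_\xi$ at time $\theta$. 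Localizing near the initial time,
\[
\sigma_F\ge\int_0^{t_0}\int_I\sigma(\theta,\xi,u(\theta,\xi))^2\,\Phi_{\theta,\xi}(t,x)^2\,d\xi\,d\theta,
\]
where $I$ is a neighborhood of $y_0$ and $t_0\in(0,t)$ is small. The deterministic map $\xi\mapsto\sigma(0,\xi,u_0(\xi))$ is continuous and nonzero at $y_0$, hence bounded below by some $\rho>0$ on $I$; by continuity of $\sigma$, of $u_0$ and of the solution up to $t=0$, the event $A=\{\,|\sigma(\theta,\xi,u(\theta,\xi))|\ge\rho/2\ \text{for all }\theta\le t_0,\ \xi\in I\,\}$ has probability close to one for $t_0$ small, and on $A$ one gets $\sigma_F\ge(\rho/2)^2 J$ with $J=\int_0^{t_0}\int_I\Phi_{\theta,\xi}(t,x)^2\,d\xi\,d\theta$.

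To bound $E[J^{-p}]$ I would apply Jensen's inequality to the normalized measure $d\xi\,d\theta/(t_0|I|)$ on $[0,t_0]\times I$, obtaining
\[
E[J^{-p}]\le(t_0|I|)^{-p-1}\int_0^{t_0}\int_I E\big[\Phi_{\theta,\xi}(t,x)^{-2p}\big]\,d\xi\,d\theta,
\]
which reduces everything to a negative moment bound for $\Phi_{\theta,\xi}(t,x)$ that is uniform in the source location $(\theta,\xi)$. This is precisely the quantity governed by Theorem \ref{t2}: although $\Phi_{\theta,\xi}$ starts from a point mass rather than from continuous data, for any $\theta'\in(\theta,t_0)$ the function $\Phi_{\theta,\xi}(\theta',\cdot)$ is already continuous and strictly positive, so restarting at time $\theta'$ and applying Theorem \ref{t2} on $[\theta',t]$ yields $E[\Phi_{\theta,\xi}(t,x)^{-2p}]<\infty$.

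The hard part will be twofold. First, Theorem \ref{t2} as stated gives finiteness but not an explicit bound, whereas the Jensen step needs these negative moments bounded uniformly over $(\theta,\xi)\in[0,t_0]\times I$; I expect to recover uniformity by inserting a deterministic continuous lower bound for $\Phi_{\theta,\xi}(\theta',\cdot)$ into the comparison lemma, Lemma \ref{lem-compare}, and re-running the stopping-time and large-deviation argument of Theorem \ref{t2} (through Lemma \ref{lem1}) with constants depending only on $\rho$, $I$, $t_0$ and $t$. Second, one must still integrate $\sigma_F^{-p}$ over the complementary event $A^c$, where the initial kick may degenerate; here I would combine the sub-Gaussian tail of the modulus of continuity of $u$ near $t=0$, which makes $P(A^c)$ small, with a crude deterministic almost-sure positive lower bound for $\sigma_F$, and the balance between these two estimates is the most delicate point of the whole argument. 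Once $E[\sigma_F^{-p}]<\infty$ is established for every $p$, Theorem \ref{t1} immediately delivers the smooth density for all $t>0$ and $x\in(0,1)$.
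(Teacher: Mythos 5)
Your overall frame agrees with the paper's: invoke part 2 of Theorem \ref{t1} for $F=u(t,x)$, get $u(t,x)\in\mathbb{D}^{\infty}$ from the standard parabolic SPDE machinery (the paper simply cites Bally--Pardoux \cite{BP}), and reduce everything to negative moments of the Malliavin variance via Theorem \ref{t2}. But the central device you propose --- factoring $D_{\theta,\xi}u(t,x)=\sigma(\theta,\xi,u(\theta,\xi))\,\Phi_{\theta,\xi}(t,x)$ through a fundamental solution with point-mass initial data --- runs into a gap that your suggested fixes do not close. Theorem \ref{t2} requires a nonnegative \emph{continuous} (and fixed) initial condition, not $\delta_{\xi}$. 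Your restart at time $\theta'$ is circular: conditionally on $\mathcal{F}_{\theta'}$, linearity and Theorem \ref{t2} give at best a bound of the form $E[\Phi_{\theta,\xi}(t,x)^{-2p}\mid\mathcal{F}_{\theta'}]\le C\,\bigl(\inf_{y\in J}\Phi_{\theta,\xi}(\theta',y)\bigr)^{-2p}$, and to integrate this you need negative moments of the \emph{infimum} of a solution started from a point mass --- a statement of exactly the type you are trying to prove, and strictly harder than Theorem \ref{t2} (it concerns an infimum over space and measure-valued data). Your plan to ``insert a deterministic continuous lower bound for $\Phi_{\theta,\xi}(\theta',\cdot)$ into Lemma \ref{lem-compare}'' cannot work because no such deterministic lower bound exists: with positive probability the random profile at time $\theta'$ is arbitrarily small on any interval. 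The same defect appears at the end of your argument, where you invoke ``a crude deterministic almost-sure positive lower bound for $\sigma_F$'' on $A^c$; the Malliavin variance of $u(t,x)$ has no almost-sure positive lower bound, so $E[\sigma_F^{-p}\mathbf{1}_{A^c}]$ cannot be controlled this way without again begging the question.

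The idea your proposal is missing --- and the paper's way around the point-mass problem --- is to \emph{average the derivative in $\xi$ before doing anything else}. Choose $[a,b]\subset(0,1)$ with $\sigma(0,y,u_0(y))\ge\delta>0$ on $[a,b]$, set $Y^{\theta}_{t,x}=\int_a^b D_{\theta,\xi}u(t,x)\,d\xi$, and use Cauchy--Schwarz to get $C_{t,x}\ge\int_0^t\bigl(Y^{\theta}_{t,x}\bigr)^2\,d\theta$. Integrating the derivative equation (\ref{e22}) in $\xi$ shows that $Y^{\theta}$ solves the same linearized SPDE (bounded adapted coefficients $\partial b/\partial u$, $\partial\sigma/\partial u$ along the solution) but now with the mollified source $\int_a^b\sigma(u(\theta,\xi))G_{t-\theta}(x,\xi)\,d\xi$; in particular for $\theta=0$ the initial condition is the deterministic function $\sigma(0,\xi,u_0(\xi))\mathbf{1}_{[a,b]}(\xi)$, bounded below by $\delta$ on $[a,b]$ --- exactly the hypotheses of Theorem \ref{t2}. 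So $Y^0_{t,x}$ has negative moments of all orders directly, with no fundamental solution, no restart, and no uniformity issue. The contribution of small $\theta$ is then handled perturbatively rather than through an event like your $A$: one writes $P(C_{t,x}<\varepsilon)\le P(A_1)+P(A_2)$ with $A_2=\{Y^0_{t,x}<\sqrt{2}\,\varepsilon^{(1-r)/2}\}$ controlled by Chebyshev and the negative moments of $Y^0_{t,x}$, and $A_1$ controlled by the estimate $\sup_{0\le\theta\le\varepsilon^r}E\bigl[|Y^{\theta}_{t,x}-Y^0_{t,x}|^{2q}\bigr]\le\varepsilon^{2sq}$, proved by Gronwall plus heat-kernel bounds; this last step is where the $\alpha$-H\"older continuity of $u_0$ (which your proposal never uses) is actually needed, to control the term $\int_a^b(\sigma(u(\theta,\xi))G_{t-\theta}(x,\xi)-\sigma(u_0(\xi))G_t(x,\xi))\,d\xi$.
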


\begin{proof}
From the results proved by Bally and Pardoux in \cite{BP} we know that $%
u(t,x)$ belongs to the space $\mathbb{D}^{\infty }$ for all $(t,x)$. Set $\ $%
\begin{equation*}
C_{t,x}=\int_{0}^{t}\int_{0}^{1}\left( D_{\theta ,\xi }u(t,x)\right)
^{2}d\xi d\theta .
\end{equation*}%
Then, by Theorem \ref{t1} it suffices to show that 
$E(C_{t,x}^{-p})<\infty$ 
for all $p\geq 2$. 

Suppose that $\sigma (0,y_{0},u_{0}(y_{0}))>0$. By continuity we have that $%
\sigma (0,y,u(0,y))\geq \delta >0$ for all $y\in \lbrack a,b]\subset (0,1)$.
Then
\begin{eqnarray*}
C_{t,x} \ge  \int_{0}^{t}\int_{a}^{b}\left( D_{\theta ,\xi }u(t,x)\right)
^{2}d\xi d\theta
\ge
\int_{0}^{t} \left(\int_{a}^{b}  D_{\theta ,\xi }u(t,x)  d\xi\right)
^{2} d\theta.
\end{eqnarray*}
Set  $Y^{\theta}_{t,x}=\int_{a}^{b}D_{\theta,\xi }u(t,x)d\xi $.
Fix $r<1$ and $\varepsilon >0$ such that $\varepsilon
^{r}<t$.  
From%
\begin{equation*}
\ \varepsilon ^{r}  \left( Y^0_{t,x}\right)^2  \leq \
  \int_{0}^{\varepsilon ^{r}}   \left| \left( Y^0_{t,x}\right)^2-\left( Y^\theta_{t,x}\right)^2
\right| d\theta   +C_{t,x}
\end{equation*}%
we get%
\begin{eqnarray*}
P(C_{t,x} <\varepsilon )&\leq& P\left(   \int_{0}^{\varepsilon
^{r}}\left| \left( Y^0_{t,x}\right)^2-\left( Y^\theta_{t,x}\right)^2
\right| d\theta   >\varepsilon %
\right)  \\
&&+P\left(  Y^0_{t,x}  < 
\sqrt{2} \varepsilon ^{\frac{1-r}2}  \right)  \\
&=&P(A_{1})+P(A_{2}).
\end{eqnarray*}%
Integrating \ equation (%
\ref{e22}) in the variable $\xi $ yields the following equation for the process
$\{Y^\theta_{t,x}, t\ge \theta, x\in [0,1]\}$
\begin{eqnarray}
Y^\theta_{t,x} &=&\int_{\theta}^{t}\int_{0}^{1}G_{t-s}(x,y)\frac{\partial b}{\partial u}
(s,y,u(s,y))Y^\theta_{s,y}dyds  \notag \\
&&+\int_{\theta}^{t}\int_{0}^{1}G_{t-s}(x,y)\frac{\partial\sigma}{\partial u}
(s,y,u(s,y))Y^\theta_{s,y}W(dy,ds)  \notag \\
&&+\int_{a}^{b}\sigma ( u(\theta,\xi ))G_{t-\theta}(x,\xi )d\xi .  \label{eq3}
\end{eqnarray}%
In particular, for $\theta=0$, the initial condition is $Y^0_{0,\xi}=\sigma(0,\xi,u(0,\xi)) \mathbf{1}_{[a,b]}(\xi)$, and by Theorem \ref{t2} the random variable $Y^0_{t,x}$ has negative moments of all orders. Hence, for all $p\ge 1$,
\[
P(A_2) \le \varepsilon^p
\]
if $\varepsilon \le \varepsilon_0$. In order to handle the probability $P(A_1)$ we write  
\begin{equation*}
P(A_{1})\leq  \varepsilon ^{(r-1)q}\sup_{  0\leq \theta \leq
\varepsilon ^{r} } \left(E\left[   \left| Y^\theta_{t,x} -Y^0_{t,x} \right|^{2q}
\right] E\left[   \left| Y^\theta_{t,x} + Y^0_{t,x} \right|^{2q}
\right] \right)^{1/2}.
\end{equation*}%
We claim that   
\begin{equation} \label{c1}
\sup_{0\le \theta \le t} E\left[ \left| Y^{\theta}_{x,t} \right|^{2q} \right] <\infty,
\end{equation}
and 
\begin{equation}  \label{c2}
\sup_{0\le \theta \le \varepsilon^r } E\left[ \left| Y^\theta_{t,x} -Y^0_{t,x}  \right|^{2q} \right] <\varepsilon^{2sq},
\end{equation}
for some $s>0$.  Property (\ref{c1}) follows easily from Equation  (\ref{eq3}). 
On the other hand, the difference  $Y^\theta_{t,x} -Y^0_{t,x} $ satisfies
\begin{eqnarray}
Y^\theta_{t,x} -Y^0_{t,x} & =& \int_{\theta}^{t}\int_{0}^{1}G_{t-s}(x,y)
\frac{\partial b}{\partial u}
(s,y,u(s,y))(Y^\theta_{s,y} -Y^0_{x,t}) dyds  \notag \\
&&+\int_{\theta}^{t}\int_{0}^{1}G_{t-s}(x,y) \frac{\partial\sigma}{\partial u}
(s,y,u(s,y))(Y^\theta_{s,y}- Y^0_{x,t}) W(dy,ds)  \notag \\
&& +\int_0^{\theta}\int_{0}^{1}G_{t-s}(x,y) \frac{\partial b}{\partial u}
(s,y,u(s,y))Y^0_{s,y}dyds  \notag \\
&& +\int_0^{\theta}\int_{0}^{1}G_{t-s}(x,y) \frac{\partial\sigma}{\partial u}
(s,y,u(s,y))Y^0_{s,y}W(dy,ds)  \notag \\
&& +\int_{a}^{b} (\sigma ( u(\theta,\xi )) G_{t-\theta}(x,\xi ) -\sigma(u_0(\xi)) G_{t }(x,\xi ))d\xi \notag\\
&=& \sum_{i=1} ^5 \Psi_i(\theta). \notag
\end{eqnarray}%
Applying Gronwall's lemma and standard estimates, to show (\ref{c2}) it suffices to prove that
\begin{equation}  \label{c3}
\sup_{0\le \theta \le \varepsilon^r } E\left( \left|  \Psi_i(\theta)  \right|^{2q} \right) <\varepsilon^{2sq},
\end{equation}
for $i=3,4,5$ and for some $s>0$.  The estimate (\ref{c3}) is clear for $i=3,4$ and for $i=5$ we use the properties of the heat kernel and the H\"older continuity of the initial condition $u_0$. 
Finally, it suffices to choose $r>1-s$ and we get the desired estimate for $P(A_1)$. The proof is now complete.
\end{proof}

\end{document}